\newtheorem{theorem}{Theorem}
\newtheorem{proposition}[theorem]{Proposition}
\newtheorem{corollary}[theorem]{Corollary}
\newtheorem*{remark}{Remark}
\Crefname{conjecture}{Conjecture}{Conjectures}
\theoremstyle{plain}
\theoremstyle{plain}
\author[Akande-Genao-Haag-Hendon-Pulagam-Schneider-Sills]{A. P. Akande, Tyler Genao, Summer Haag, Maurice D. Hendon, \\Neelima Pulagam, Robert Schneider and Andrew V. Sills}
\address{Department of Mathematics\newline
University of Georgia\newline
Athens, Georgia 30602, U.S.A.}
\email{agbolade.akande@uga.edu}
\email{tylergenao@uga.edu}
\email{summer.haag@uga.edu}
\email{mhendon@uga.edu}
\email{np26102@uga.edu}%
\address{Department of Mathematical Sciences\newline
Michigan Technological University\newline
Houghton, Michigan 49931, U.S.A.}
\email{robertsc@mtu.edu}
\address{Department of Mathematical Sciences\newline
Georgia Southern University\newline
Statesboro, Georgia 30458, U.S.A.}
\email{asills@georgiasouthern.edu}
\title{Computational study of non-unitary partitions}
\begin{document}
 
\begin{abstract}
Following Cayley,  MacMahon, and Sylvester, define a non-unitary partition to be an integer partition with no part equal to one, and let $\nu(n)$ denote the number of non-unitary partitions of size $n$. In a 2021 paper, the sixth author proved a formula to compute $p(n)$ by enumerating only non-unitary partitions of size $n$, and recorded a number of conjectures regarding the growth of $\nu(n)$ as $n\to \infty$. Here we refine and prove some of these conjectures. For example, we prove $p(n) \sim \nu(n)\sqrt{n/\zeta(2)}$ as $n\to \infty$, and give Ramanujan-like congruences between $p(n)$ and $\nu(n)$ such as $p(5n)\equiv \nu(5n)\  (\operatorname{mod} 5)$.  \end{abstract}

\maketitle

\section{Introduction and statement of results}

\subsection{Non-unitary partitions and $p(n)$}\label{sect1}

Let $\mathcal P$ denote the set of integer partitions, 
including the empty partition $\emptyset$. Let $\mathcal P_n$ denote partitions of size (sum) equal to $n\geq 0$, with $\mathcal P_0:=\{\emptyset\}$.  The {\it partition function} $p(n)$ gives the cardinality $\#\mathcal P_n$, with $p(0):=1$ \cite{Andrews_theory}. 

In a 2021 paper \cite{non-unitary}, the sixth author (Schneider) studies the class of {\it non-unitary partitions},\footnote{Non-unitary partitions are referred to as ``nuclear partitions'' in \cite{non-unitary}.} which are  partitions having no part equal to one, and records a number of conjectures. Non-unitary partitions appear to have first arisen in the literature in the 1880s in the work of MacMahon \cite{N2} and Sylvester \cite{N3}  in connection with seminvariants in the theory of invariants. Cayley made another early contribution with \cite{N1}.  Guy \cite{Guy} proved the number of non-unitary partitions of size $n$ into odd parts, equals the number of partitions of size $n$ into distinct parts none of which is a power of two. The present authors, using tools from number theory and computer science, made a computational study of non-unitary partitions that led us to prove some of the conjectures in \cite{non-unitary}. 



Let $\mathcal N\subset \mathcal P$ denote the set of non-unitary partitions, 
let $\mathcal N_n\subset \mathcal P_n$ denote non-unitary partitions of size $n\geq 0$, and let $\nu(n)$ denote the cardinality $\#\mathcal N_n$, with $\nu(0):=1$. 
It is not hard to see 
\begin{equation}\label{nu_recursion}
p(n)\  =\  p(n-1)+\nu(n),
\end{equation} 
since every partition of $n$ that is {\it not} non-unitary, can be obtained by adjoining 1 to a partition of $n-1$. Immediately this recursion implies 
$p(n)=\nu(0)+\nu(1)+\nu(2)+\cdots+\nu(n).$ 
As a result of \eqref{nu_recursion}, in \cite{non-unitary} it is proved one can view partitions in $\mathcal N_n$ as ``decaying'' to produce the rest of $\mathcal P_n$ by a certain algorithm resembling nuclear decay, which yields a formula for $p(n)$ requiring one to generate only {non-unitary} partitions of size $n$ (see \cite[Thm. 1]{non-unitary}). To see this is useful, let us compare $\#\mathcal N_n$ to $\#\mathcal P_n$.
%
%
In \cite{non-unitary} it is proved using the Hardy--Ramanujan asymptotic \cite[Eq. 1.41]{H-R}, viz. 
\begin{equation}\label{H-R}
p(n) \sim \frac{e^{A\sqrt{n}}}{Bn}\  \  \text{with} \  \  A=\pi\sqrt{2/3},\  \  B=4\sqrt{3},
\end{equation}
that $\nu(n)=o(p(n))$. What more can one deduce about the relative growths of $p(n), \nu(n)$?

\subsection{Our main results}
Based on numerical patterns in Table \ref{Table} below,\footnote{We note the $n=0$ row is omitted from Table \ref{Table}. In \cite{non-unitary}, the  initial values $\gamma(0)=\nu(0)=0$ are defined; alternatively, the definitions $\gamma(0)=\nu(0)=1$ are consistent with the convention  $p(0)=1$.} 
an explicit comparison relating $p(n)$ and $\nu(n)$ is conjectured in \cite{non-unitary}: 
\vskip.1cm \begin{center}{\it \label{conj1} As $n\to \infty$, we have 
$p(2n)\  \approx\  \nu(2n)\sqrt{2n},$}\end{center}
\vskip.1cm
where ``$\approx$'' means approximately equal in magnitude. However, the approximation  as stated 
is not compatible with \eqref{H-R}. Our more extensive computations 
up to $n=3000$ suggested the estimate needs a multiplicative constant, and extends to odd $n$ as well. 

Recall the well-known evaluation $\zeta(2)=\pi^2/6$ of the Riemann zeta function.

\begin{theorem}\label{thm1} 
As $n\to \infty$, we have the asymptotic relation 
$$p(n)\  \sim\  \nu(n)\sqrt{\frac{n}{\zeta(2)}}.$$
 \end{theorem}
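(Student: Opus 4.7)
The plan is to leverage the recursion $\nu(n) = p(n)-p(n-1)$ from \eqref{nu_recursion}, which rewrites the conclusion as
$$\frac{\nu(n)}{p(n)} \;=\; 1-\frac{p(n-1)}{p(n)} \;\sim\; \sqrt{\frac{\zeta(2)}{n}}.$$
Thus the theorem reduces to a quantitative statement about how fast the ratio $p(n)/p(n-1)$ approaches $1$.

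Next, I would expand this ratio using \eqref{H-R}. Set $f(n) := e^{A\sqrt{n}}/(Bn)$. The elementary estimates $\sqrt{n}-\sqrt{n-1} = 1/(2\sqrt{n})+O(n^{-3/2})$ and $\log(n/(n-1)) = 1/n+O(n^{-2})$ combine to give
$$\log\frac{f(n)}{f(n-1)} \;=\; \frac{A}{2\sqrt{n}}+O\!\left(\frac{1}{n}\right).$$
Since $A = \pi\sqrt{2/3}$ satisfies $(A/2)^2 = \pi^2/6 = \zeta(2)$, this simplifies to $f(n)/f(n-1) = 1+\sqrt{\zeta(2)/n}+O(1/n)$. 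Were it legitimate to substitute $p$ for $f$ here, the proof would be complete.

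The main obstacle is precisely that such a substitution does not follow from \eqref{H-R} alone: the statement $p(n)\sim f(n)$ only guarantees $p(n)/f(n)\to 1$, with no rate, which is not sharp enough to transport a $\sqrt{\zeta(2)/n}$ correction through the ratio $p(n)/p(n-1)$. To overcome this, I would invoke a refined form of Hardy--Ramanujan---available via Rademacher's convergent series or a standard saddle-point analysis of the Dedekind eta function---in the shape
$$p(n) \;=\; f(n)\bigl(1+\alpha/\sqrt{n}+O(1/n)\bigr)$$
for some explicit constant $\alpha$. Because $\alpha/\sqrt{n}-\alpha/\sqrt{n-1} = O(n^{-3/2})$, the first-order correction telescopes away when the ratio is formed, leaving $p(n)/p(n-1) = 1+\sqrt{\zeta(2)/n}+O(1/n)$, and the identity $\nu(n)/p(n) = 1-p(n-1)/p(n)$ then yields the theorem.

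A clean alternative, which sidesteps the need to track the $1/\sqrt{n}$ correction in $p(n)$, is to apply the saddle-point or circle method directly to the generating function $\sum_{n\geq 0}\nu(n)q^n = (1-q)\prod_{k\geq 1}(1-q^k)^{-1}$. At the dominant real saddle $q = e^{-t_n}$ with $t_n\sim \pi/\sqrt{6n}$, the extra factor $(1-q)$ evaluates to $t_n(1+o(1)) = \sqrt{\zeta(2)/n}\,(1+o(1))$, so the steepest-descent integral reproduces the asymptotic for $p(n)$ scaled by this factor, immediately giving $\nu(n)\sim p(n)\sqrt{\zeta(2)/n}$. Either way, the main technical work is bookkeeping of the $O(1/n)$ error terms in a refined asymptotic expansion.
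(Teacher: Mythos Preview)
Your approach is essentially the paper's: use the recursion $\nu(n)=p(n)-p(n-1)$ to reduce to $1-p(n-1)/p(n)\sim\sqrt{\zeta(2)/n}$ and then extract this from the Hardy--Ramanujan asymptotic. The paper carries out exactly this computation but simply writes $(1-p(n-1)/p(n))^{-1}\sim(1-e^{A(\sqrt{n-1}-\sqrt{n})})^{-1}$ citing \eqref{H-R}, glossing over the very obstacle you flag---your observation that the bare relation $p(n)\sim f(n)$ does not by itself control this difference of ratios, and that a Hardy--Ramanujan expansion with an explicit error term is needed, is well taken and makes your version the more careful of the two.
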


\begin{proof}
We use the asymptotic estimate $1-e^{-x}\sim x$ as $x\to 0$, that arises from the Maclaurin series expansion for $f(x)=e^{-x}$.\footnote{These proofs for Theorems \ref{thm1} and \ref{thm2} were suggested by G. E. Andrews, and simplify our original proofs.} Note by \eqref{nu_recursion} that as $n\to \infty$, 
\begin{flalign}\label{pfeq1}
\frac{p(n)}{\nu(n)}\  &=\  \frac{p(n)}{p(n)-p(n-1)}\  =\  \left(1-\frac{p(n-1)}{p(n)}\right)^{-1} \sim\   \left(1-{ e^{A(\sqrt{n-1}-\sqrt{n})}}\right)^{-1}\\ \nonumber 
 &\sim\   \left(A\left(\sqrt{n}-\sqrt{n-1}\right)\right)^{-1}\  =\     A^{-1} \left(\sqrt{n}+\sqrt{n-1}\right)\  \sim\  2A^{-1}\sqrt{n},
\end{flalign} 
with $2A^{-1}=\sqrt{6/\pi^2}=\sqrt{\zeta(2)^{-1}}$, which leads to the statement of the theorem.  We use \eqref{H-R} for the first asymptotic above, use $1-e^{-x}\sim x$ for the second asymptotic, and use $\sqrt{n}+\sqrt{n-1}\sim 2\sqrt{n}$ for the third asymptotic after rationalizing the denominator.
 \end{proof}

\begin{remark}
We note that Theorem \ref{thm1} also follows from \eqref{H-R} together with \cite[Eq. (1.2)]{SillsChamp}.
\end{remark}
In fact, an even smaller subset of $\mathcal P$ can be shown to generate $\mathcal N$, and to control the growth of $\nu(n)$ in the same sense that $\nu(n)$ controls $p(n)$ \cite{non-unitary}. Let a {\it ground state non-unitary partition} denote a partition in $\mathcal N$ such that the largest part appears two or more times. 
Let $\mathcal G$ denote the set of all ground state non-unitary partitions, let $\mathcal G_n$ denote the ground state non-unitary partitions of size $n$, and let $\gamma(n)$ denote the cardinality $\#\mathcal G_n$ with $\gamma(0):=0$. One can compute $p(n)$ as a linear combination of the values $\gamma(k), k\leq n$ \cite[Eq. 5]{non-unitary}. Table \ref{Table} gives a comparison of the values of $\gamma(n), \nu(n), p(n)$ for small $n$. 
\begin{remark}
Consideration of Young diagrams shows the sets $\mathcal G$ and $\mathcal G_n$ are both closed under partition conjugation, which is also true for $\mathcal P$ and $\mathcal P_n$, but is not true for $\mathcal N$ or $\mathcal N_n$.\end{remark}
Note that a partition in $\mathcal G_n$ is a non-unitary partition of size $n$, and each element of $\mathcal N_n$ that is not an element of $\mathcal G_n$ can be constructed by adding 1 to the largest part of a partition in $\mathcal N_{n-1}$. Then we also have the recursion
\begin{equation}\label{gamma_recursion}
\nu(n)\  =\  \nu(n-1)+\gamma(n).
\end{equation} 
This recursion together with the given initial values yields $\nu(n)=1+\gamma(1)+\gamma(2)+\gamma(3)+\gamma(4)+\cdots+\gamma(n)$. One can deduce by repeated application of \eqref{H-R} that $\gamma(n)=o(\nu(n))$. 

Another approximation is conjectured in \cite{non-unitary} based on numerical evidence in Table \ref{Table}: 
\vskip.1cm

\begin{center}{{\it As $n\to \infty$, we have 
$p(2n)\  \approx\  2n\cdot\gamma(2n)$.}}\end{center}

\vskip.1cm
\noindent 
Again, the stated approximation is incompatible with \eqref{H-R}. However, with a multiplicative constant it turns out to be true, 
and extends to odd $n$, as a corollary of the next theorem. 

\begin{theorem}\label{thm2}  
As $n\to \infty$, we have the asymptotic relation 
$$\nu(n)\  \sim\  \gamma(n)\sqrt{\frac{n}{\zeta(2)}}.$$
 \end{theorem}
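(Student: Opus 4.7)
The plan is to mimic the proof of Theorem \ref{thm1}, but using the recursion \eqref{gamma_recursion} in place of \eqref{nu_recursion}. From \eqref{gamma_recursion} we have $\gamma(n)=\nu(n)-\nu(n-1)$, so
$$\frac{\nu(n)}{\gamma(n)}\  =\  \frac{\nu(n)}{\nu(n)-\nu(n-1)}\  =\  \left(1-\frac{\nu(n-1)}{\nu(n)}\right)^{-1}.$$
Hence the task reduces to understanding the ratio $\nu(n-1)/\nu(n)$ to sufficient precision that $1-e^{-x}\sim x$ can be applied.

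To do this I first need an asymptotic for $\nu(n)$ itself. Theorem \ref{thm1} gives $\nu(n)\sim p(n)\sqrt{\zeta(2)/n}$, which combined with the Hardy--Ramanujan estimate \eqref{H-R} yields
$$\nu(n)\  \sim\  \frac{\sqrt{\zeta(2)}\,e^{A\sqrt{n}}}{B\,n^{3/2}},$$
with $A=\pi\sqrt{2/3}$ and $B=4\sqrt{3}$. Therefore
$$\frac{\nu(n-1)}{\nu(n)}\  \sim\  \left(\frac{n}{n-1}\right)^{3/2}\!e^{A(\sqrt{n-1}-\sqrt{n})}\  \sim\  e^{A(\sqrt{n-1}-\sqrt{n})},$$
since $(n/(n-1))^{3/2}\to 1$ as $n\to\infty$, while the exponential factor tends to $1$ only like $1-O(n^{-1/2})$, so the polynomial prefactor is absorbed into the first asymptotic equivalence.

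From this point the computation is identical to the one in Theorem \ref{thm1}: applying $1-e^{-x}\sim x$ gives
$$\frac{\nu(n)}{\gamma(n)}\  \sim\  \bigl(A(\sqrt{n}-\sqrt{n-1})\bigr)^{-1}\  =\  A^{-1}(\sqrt{n}+\sqrt{n-1})\  \sim\  2A^{-1}\sqrt{n},$$
and $2A^{-1}=\sqrt{\zeta(2)^{-1}}$, yielding the claimed asymptotic. The only thing to watch is that the polynomial prefactor $(n/(n-1))^{3/2}$ contributes only a correction of order $1+O(1/n)$, which is negligible compared with the $O(1/\sqrt{n})$ deviation of $e^{A(\sqrt{n-1}-\sqrt{n})}$ from $1$; this is the main (and only mild) technical obstacle, and it does not affect the leading-order behaviour.
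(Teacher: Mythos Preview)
Your proof is correct and follows essentially the same route as the paper: start from $\nu(n)/\gamma(n)=(1-\nu(n-1)/\nu(n))^{-1}$ via \eqref{gamma_recursion}, estimate $\nu(n-1)/\nu(n)$ using Theorem~\ref{thm1} together with Hardy--Ramanujan, and then reuse the computation from the proof of Theorem~\ref{thm1}. The only cosmetic difference is that the paper substitutes $\nu(k)\sim p(k)\sqrt{\zeta(2)/k}$ directly to reduce to $(1-p(n-1)/p(n))^{-1}$ and cites \eqref{pfeq1}, whereas you pass through the explicit asymptotic $\nu(n)\sim \sqrt{\zeta(2)}\,e^{A\sqrt n}/(Bn^{3/2})$; your extra remark that the $(n/(n-1))^{3/2}$ prefactor is $1+O(1/n)$ and hence negligible against the $O(1/\sqrt n)$ main term is in fact a point the paper leaves implicit.
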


\begin{proof} 
Note by \eqref{gamma_recursion} that as $n\to \infty$, we have
\begin{flalign}\label{pfeq}
\frac{\nu(n)}{\gamma(n)}\  &=\  \frac{\nu(n)}{\nu(n)-\nu(n-1)}\  =\  \left(1-\frac{\nu(n-1)}{\nu(n)}\right)^{-1}   \\ \nonumber &\sim\  \left(1-\frac{p(n-1)\sqrt{n}}{p(n)\sqrt{n-1}}\right)^{-1} 
\nonumber \sim\  \left(1-\frac{p(n-1)}{p(n)}\right)^{-1}  \sim\  \sqrt{n/\zeta(2)},
\end{flalign} 
where we use Theorem \ref{thm1} for the first asymptotic, use $\sqrt{n/(n-1)}\sim 1$ for the second asymptotic, and refer back to \eqref{pfeq1} for the third asymptotic. 
\end{proof}

\begin{corollary} \label{cor1} 
As $n\to \infty$, we have the asymptotic relation
$$p(n)\  \sim\   \frac{n\cdot \gamma(n)}{\zeta(2)}.$$
 \end{corollary}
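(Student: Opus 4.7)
The plan is to obtain the corollary by simply composing the two asymptotic relations already established in Theorems \ref{thm1} and \ref{thm2}. Since both asymptotics have exactly the same multiplicative factor $\sqrt{n/\zeta(2)}$ on the right-hand side, chaining them will produce a clean square of that factor, yielding $n/\zeta(2)$.

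Concretely, I would start from Theorem \ref{thm1}, which gives
\[
p(n) \;\sim\; \nu(n)\sqrt{\frac{n}{\zeta(2)}} \qquad (n \to \infty),
\]
and then invoke Theorem \ref{thm2} to replace $\nu(n)$ by its asymptotic $\gamma(n)\sqrt{n/\zeta(2)}$. Using the elementary fact that asymptotic equivalence is preserved under multiplication by a fixed positive function (and under taking the product of two $\sim$-relations), I obtain
\[
p(n) \;\sim\; \gamma(n)\sqrt{\frac{n}{\zeta(2)}} \cdot \sqrt{\frac{n}{\zeta(2)}} \;=\; \frac{n\cdot\gamma(n)}{\zeta(2)},
\]
which is exactly the claimed asymptotic.

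There is essentially no obstacle here: the only thing to verify is the standard fact that if $a_n \sim b_n$ and $b_n \sim c_n$ with all sequences eventually positive, then $a_n \sim c_n$, and that one can multiply a $\sim$-relation through by a fixed factor such as $\sqrt{n/\zeta(2)}$. Both properties follow at once from the definition $a_n/b_n \to 1$. So the corollary is immediate and the proof will be a one-line chain of asymptotic substitutions, with no further analytic input required.
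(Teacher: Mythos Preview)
Your proof is correct and follows exactly the same approach as the paper: substitute the asymptotic for $\nu(n)$ from Theorem~\ref{thm2} into the asymptotic of Theorem~\ref{thm1}. The paper's proof is in fact a single sentence to this effect.
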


\begin{proof} Substitute the right side of the formula in Theorem \ref{thm2}, for $\nu(n)$ in Theorem \ref{thm1}.
\end{proof}

%
%
%
%

\begin{table}
\vskip.1in  \begin{center}
\begin{tabular}{ | c | c | c | c | }
\hline
{\bf $n$} &{\bf $\gamma(n)$} & {\bf $\nu(n)$}&  {\bf $p(n)$}\\ \hline
1 & 0 & 0 & 1   \\ \hline
2 & 0 & 1 &  2   \\ \hline
3 & 0 & 1 & 3   \\ \hline
4 & 1 & 2 & 5  \\ \hline
5 & 0 & 2 & 7  \\ \hline
6 & 2 & 4 & 11   \\ \hline
7 & 0 & 4 & 15   \\ \hline
8 & 3 & 7 & 22 \\  \hline
9 & 1 & 8 & 30   \\  \hline
10 & 4 & 12 & 42  \\  \hline
11 & 2 & 14 & 56  \\  \hline
12 & 7 & 21 & 77  \\  \hline
13 & 3 & 24 & 101  \\  \hline
14 & 10 & 34 & 135  \\  \hline
15 & 7 & 41 & 176  \\  \hline
16 & 14 & 55 & 231  \\  \hline
17 & 11 & 66 & 297  \\  \hline
18 & 22 & 88 & 385  \\  \hline
19 & 17 & 105 & 490  \\  \hline
20 & 32 & 137 & 627  \\  \hline
$\cdots$ & $\cdots$ & $\cdots$ & $\cdots$ \\  \hline
$100$ & \  $2{,}307{,}678$\   & \  $21{,}339{,}417$\   & \  $190{,}569{,}292$\  \\
\hline
\end{tabular}
\smallskip
\smallskip
\smallskip
\caption{Comparison of $\gamma(n), \nu(n), p(n)$ reproduced from \cite{non-unitary}.}\label{Table}\end{center}
\end{table}

 \section{Discussion of other conjectures and observations} 
\subsection{Questions of monotonicity of $\gamma(n)$}
In the previous section, we addressed conjectures raised in \cite{non-unitary} related to the asymptotic behaviors of $\nu(n)$ and $\gamma(n)$ as $n$ increases. There are other questions brought up in that work based on inspection of Table \ref{Table}, related to arithmetic properties of the sequences $\nu(n), \gamma(n)$, which we address in this section. 

As noted in \cite{non-unitary}, an immediate contrast between the $\gamma(n)$ column of Table \ref{Table} and the other two columns, is that while $p(n), \nu(n)$ both grow (weakly) monotonically,\footnote{That both $p(n)-p(n-1)=\nu(n)$ and $\nu(n)-\nu(n-1)=\gamma(n)$ are non-negative is clear.} the values of $\gamma(n)$ oscillate as they increase, with both $\gamma(2k-1)<\gamma(2k)$ and $\gamma(2k)>\gamma(2k+1)$ holding for the entries in the table. On the other hand, inspection of the table up to $n=20$  suggests $\gamma(n)$ does grow  monotonically if $n\to \infty$ through only odd or even $n$-values; this trend continues in our computations up to $n=3000$, and is readily proved.

\begin{proposition}\label{mono2}
For all $n\geq 3$, we have that $$\gamma(n)\geq \gamma(n-2).$$
\end{proposition}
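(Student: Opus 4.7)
The strategy is to exhibit an explicit injection $\phi \colon \mathcal{G}_{n-2} \hookrightarrow \mathcal{G}_n$, from which the desired inequality is immediate. The case $n = 3$ is vacuous, since Table \ref{Table} shows $\gamma(1) = \gamma(3) = 0$; I may therefore assume $n \geq 4$, so that every $\lambda \in \mathcal{G}_{n-2}$ has at least two parts, each at least $2$.

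For $\lambda = (\lambda_1, \lambda_2, \lambda_3, \ldots, \lambda_k) \in \mathcal{G}_{n-2}$, the defining properties of a ground state non-unitary partition give $\lambda_1 = \lambda_2 \geq \lambda_3 \geq \cdots \geq \lambda_k \geq 2$. I would then define
$$\phi(\lambda) \;:=\; (\lambda_1 + 1,\; \lambda_2 + 1,\; \lambda_3, \ldots, \lambda_k).$$
One checks $\phi(\lambda) \in \mathcal{G}_n$ directly: the size increases by $2$ to $n$; the sequence is weakly decreasing since $\lambda_2 + 1 > \lambda_2 \geq \lambda_3$; no part equals $1$ because each $\lambda_j \geq 2$ for $j \geq 3$ and $\lambda_1 + 1 \geq 3$; and the new largest part $\lambda_1 + 1$ occurs at least twice.

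For injectivity, note that in $\phi(\lambda)$ the value $\lambda_1 + 1$ strictly exceeds $\lambda_j$ for every $j \geq 3$, so the maximum of $\phi(\lambda)$ has multiplicity exactly two. Consequently $\lambda$ can be recovered from $\phi(\lambda)$ by subtracting $1$ from each of the two occurrences of the maximum, so $\phi$ is injective. This yields $\gamma(n) = \#\mathcal{G}_n \geq \#\mathcal{G}_{n-2} = \gamma(n-2)$. The only delicate point is ensuring the multiplicity of the new maximum is exactly two rather than more — this is where the strict inequality $\lambda_1 + 1 > \lambda_3$, forced by $\lambda_1 \geq \lambda_3$, is essential; all remaining verifications are routine.
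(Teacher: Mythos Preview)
Your proof is correct, but it uses a different injection from the paper's. The paper simply adjoins a part equal to $2$ to each $\lambda\in\mathcal G_{n-2}$; this is trivially injective (delete one copy of $2$ to invert), and the image is contained in the set of partitions in $\mathcal G_n$ whose smallest part is $2$. Your map instead increments the two largest parts by $1$. Both work, but the paper's injection is marginally cleaner: injectivity is immediate with no need to argue about the multiplicity of the new maximum, and the image admits a one-line description. Your approach has the minor advantage of preserving the number of parts, and your injectivity argument (the new maximum has multiplicity exactly two because $\lambda_1+1>\lambda_1\ge\lambda_3$) is a nice observation, though it is extra work the paper's route avoids. Incidentally, your reduction to $n\ge 4$ is harmless but unnecessary: any ground state non-unitary partition automatically has at least two parts, since the largest part is repeated.
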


\begin{proof}
For $n\geq 3$, adjoin $2$ as a part to every partition in $\mathcal G_{n-2}$, to produce the  partitions in $\mathcal G_{n}$ having smallest part $2$; thus $\gamma(n)$ is at least equal to $\gamma(n-2)$. Noting for $n\geq 6$ that $\mathcal G_{n}$ might also contain partitions with smallest part $\geq 3$, gives the inequality.  \end{proof}

On the other hand, our computations for $20<n\leq3000$ display another trend, contradicting our comments above about oscillating behavior. 

\begin{proposition}\label{conj}
For all $n\geq 26$, we have that $$\gamma(n)\geq\gamma(n-1).$$
\end{proposition}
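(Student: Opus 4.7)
The plan is to translate the claim into a third-difference inequality for $p(n)$, establish it asymptotically via an effective Hardy--Ramanujan estimate, and cover the remaining finite range by direct computation.

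First, combining \eqref{nu_recursion} with \eqref{gamma_recursion} yields
$\gamma(n)=p(n)-2p(n-1)+p(n-2),$
so the claim $\gamma(n)\geq \gamma(n-1)$ is equivalent to the third forward difference inequality
$$p(n)-3p(n-1)+3p(n-2)-p(n-3)\ \geq\  0 \qquad (n\geq 26).$$
Writing $r_k:=p(k-1)/p(k)\in (0,1)$, the left-hand side factors as
$p(n)\bigl[(1-r_n)-2r_n(1-r_{n-1})+r_nr_{n-1}(1-r_{n-2})\bigr]$.
In the diagonal case $r_n=r_{n-1}=r_{n-2}=r$ this bracket equals $(1-r)^3$, and since the $r_k$ tend to $1$ very slowly, it should remain close to $(1-r_n)^3$ in general.

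To make this quantitative, I would invoke an effective version of \eqref{H-R}---for example, the estimates implicit in Rademacher's convergent series or Lehmer's error bounds---to obtain
$1 - r_k \ =\ \nu(k)/p(k)\ =\  A/(2\sqrt k) + O(1/k)$
with explicit constants. Expanding the bracket, the dominant contribution is of order $A^3/(8n^{3/2})$, which outpaces the $O(1/n^2)$ cross-terms; hence the bracket is strictly positive for all $n\geq N_0$, where $N_0$ is an effectively computable threshold.

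For the remaining range $26\leq n<N_0$, the inequality is checked by direct computation, using $\gamma(n)=\nu(n)-\nu(n-1)$ together with Euler's pentagonal-number recurrence for $p(n)$. The paper already reports such computations up to $n=3000$, so provided $N_0\leq 3000$ the proof is complete. The main obstacle is precisely this quantitative step: one must sharpen the error analysis in the Hardy--Ramanujan/Rademacher expansion carefully enough that $N_0$ lies within the computationally verified range. This is technical but routine given the standard effective bounds on $p(n)$ in the literature.
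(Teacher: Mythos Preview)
Your reduction to the third-difference inequality $\Delta^3 p(n)\geq 0$ and your plan to establish it via effective Hardy--Ramanujan asymptotics plus a finite computational check is exactly the paper's approach: the paper makes the same reduction and then cites Gupta (\emph{Math.\ Comp.}\ 1978), who carried out precisely this analytic error analysis on $\Delta^r p(n)$ and obtained the sharp threshold $n_0(3)=26$. So your outline is correct, and the ``technical but routine'' step you flag as the main obstacle is already completed in the literature---you can simply cite Gupta rather than redo it.
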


\begin{proof}
For $f\colon \mathbb Z
\to \mathbb C$, let $\Delta f(n):=f(n)-f(n-1)$ denote the first difference of $f$, let $\Delta^2 f(n):=\Delta(\Delta f(n))$ denote the second difference, and for $r\geq 1$, let $\Delta^r f(n):=\Delta(\Delta^{r-1} f(n))$ denote the $r$th difference of the function $f$. Noting for $n\geq 2$ that $\nu(n)=\Delta p(n)$ and $\gamma(n)=\Delta^2 p(n)$, then the proposition is equivalent to the statement that $\Delta^3 p(n)\geq 0$ for all $n\geq 26$. In \cite{Gupta}, Gupta establishes that for each $r\geq 1$, there exists $n_0=n_0(r)\geq 1$ such that $\Delta^r p(n)\geq 0$ for all $n\geq n_0$. Gupta's proof exploits the Hardy-Ramanujan asymptotic \eqref{H-R} extended to $\Delta^r p(n)$, using an analytic argument about the error term; for $r=3$, Gupta gives $n_0(3)=26$.\footnote{The authors are grateful to F. Zanello for pointing out Gupta's proof to us.}  We note that Odlyzko proves in \cite{Odlyzko} that $\Delta^r p(n)$ oscillates for $n<n_0$, $r\geq 1$, and that $n_0(r)\sim \frac{6}{\pi^2}n^2 (\log r)^2$ as $n\to\infty$.  
\end{proof}

Table \ref{Table2} gives the values of $\gamma(n)$ and  $\gamma(n)-\gamma(n-1)$ for $20<n\leq 40$.  
%
Gupta's analytic proof \cite{Gupta} of Proposition \ref{conj}, while elegant, does not provide combinatorial insight into the behavior of $\gamma(n)$. Let us briefly analyze the problem further, by decomposing $\mathcal G_n$ as a union of two disjoint subsets. Let $\mathcal G_n^{(1)}\subseteq \mathcal G_n$ denote the subset of partitions in $\mathcal G_n$ such that 1 can be subtracted from some part with the resulting partition being in $\mathcal G_{n-1}$. Let $\mathcal G_n^{(2)}\subseteq \mathcal G_n$ denote size-$n$ partitions of either shape $(c,c)$ or $(c,c,2,2,\dots, 2)$, with $c\geq 2$, noting  $\mathcal G_n^{(2)}$  is empty if $n$ is odd, and  $\mathcal G_n^{(1)}\cap\mathcal G_n^{(2)}$ is empty.  
If a partition $\lambda\in \mathcal G_n$ does not lie in $\mathcal G_n^{(2)}$, then $\lambda$ will have at least one  part from which 1 can be subtracted with the result being in $\mathcal G_{n-1}$,  and lies in $\mathcal G_n^{(1)}$. Thus $\mathcal G_n = \mathcal G_n^{(1)}\cup \mathcal G_n^{(2)}$, and   \begin{equation}\label{gammasum} \gamma(n)  =  \#\mathcal G_n^{(1)}+ \#\mathcal G_n^{(2)}.\end{equation}

We derive formulas for $\#\mathcal G_n^{(1)}, \#\mathcal G_n^{(2)}$. For $x\in\mathbb R$,  define a {modified floor function}  $\left\lfloor x \right\rfloor^*:= 0$ if $x$ is an integer, and $\left\lfloor x \right\rfloor^*:=\left\lfloor x\right\rfloor$ otherwise. 
Each partition $\pi\in\mathcal G_{2k}^{(2)}, k>1,$ has either the form $\pi=
(k-j, k-j, 2, 2, \dots, 2)$ with $k-j>2$ and $j$ copies of 2, $0\leq j < k-2$, or $\pi=(2,2,\dots,2)$ with $k$ copies of 2. Then  
$\#\mathcal G_n^{(2)}=\left\lfloor \frac{n-1}{2} \right\rfloor^*.$ 

To address $\#\mathcal G_n^{(1)}$, we need to define another, closely-related subset. For $k\geq 1$, let $\mathcal G_k^{(0)}\subseteq\mathcal G_k$ denote the subset of  partitions in $\mathcal G_k$ that are {\it not} of the form  $(d,d,\dots,d)$ with $d\in \mathbb N$ a nontrivial divisor of $k$, i.e., those $\sigma_0(k)-2$ partitions in $\mathcal G_k$ whose Young diagrams are  not rectangles, with $\sigma_0(k)$ the number of divisors of $k$. 
It follows that \begin{equation}\label{G0} \#\mathcal G_k^{(0)}=\gamma(k)-\sigma_0(k)+2.\end{equation}

Now, adding 1 to an allowable part of  a partition  $\lambda$ in $\mathcal G_{n-1}^{(0)}$ (allowable, in that adding 1 results in another ground state non-unitary partition) yields a partition in $\mathcal G_n^{(1)}$, and this can be done in one or more ways for each $\lambda\in \mathcal G_{n-1}^{(0)}$. This resembles the construction of the partitions $\mathcal P_n$ from the partitions $\mathcal P_{n-1}$ in Young's lattice in representation theory  \cite{Young}.\footnote{The authors are grateful to J. Lagarias for a detailed discussion of the construction of Young's lattice.} 
  Likewise, subtracting 1 from an allowable part of a partition $\lambda'$ in $\mathcal G_n^{(1)}$ (allowable, in that subtracting 1 produces a ground state non-unitary partition) yields a partition in $\mathcal G_{n-1}^{(0)}$, and this also might be done in multiple ways for each $\lambda'\in \mathcal G_n^{(1)}$.

Due to multiple copies of some partitions being produced in both directions, leading to potential over-counting, it is a difficult combinatorial task to compare the cardinalities of $\mathcal G_{n-1}^{(0)}$ and $\mathcal G_n^{(1)}$ by keeping track of these ``add-one'' and ``subtract-one'' mappings. 
Define a statistic $\varepsilon(n)\in \mathbb Z$ to track the difference between these cardinalities:
\begin{equation}\label{epsilondef} \#\mathcal G_{n}^{(1)}= \#\mathcal G_{n-1}^{(0)}+\varepsilon(n).\end{equation}  
Rewriting  \eqref{gammasum} in light of \eqref{G0} and \eqref{epsilondef}, we arrive at an identity 
\begin{equation}\label{gamma*}\gamma(n)\  =\  \gamma(n-1)+\varepsilon(n)-\sigma_0(n-1)+\left\lfloor \frac{n-1}{2} \right\rfloor^* + 2,\end{equation}  
somewhat analogous to \eqref{nu_recursion} and \eqref{gamma_recursion}, that identifies the difference between $\gamma(n)$ and $\gamma(n-1)$. From \eqref{gamma*}, we see $\varepsilon(n)$ captures the ``wild card'' component in the growth of $\gamma(n)$.  
Then it follows from Proposition \ref{conj} that,  
for $n\geq 26$, we have  $$\varepsilon(n)\  \geq\  \sigma_0(n-1)-\left\lfloor \frac{n-1}{2} \right\rfloor^* -2. $$

\begin{table}
\vskip.1in  \begin{center}
\begin{tabular}{ | c | c | c | c | }
\hline
{\bf $n$} &{\bf $\gamma(n)$} & {\bf $\gamma(n)-\gamma(n-1)$}\\ \hline
21 & 28 & $-4$   \\ \hline
22 & 45 & 17    \\ \hline
23 & 43 & $-2$    \\ \hline
24 & 67 & 24  \\ \hline
25 & 63 & $-4$   \\ \hline
26 & 95& 32    \\ \hline
27 & 96& 1    \\ \hline
28 & 134 & 38  \\  \hline
29 & 139 & 5   \\  \hline
30 & 192 & 53   \\  \hline
31 & 199 & 7  \\  \hline
32 & 269 & 70   \\  \hline
33 & 287 & 18  \\  \hline
34 & 373 & 86   \\  \hline
35 & 406 & 33 \\  \hline
36 & 521 & 115   \\  \hline
37 & 566 & 45  \\  \hline
38 & 718 & 152   \\  \hline
39 & 792 & 74   \\  \hline
40 & 983 & 191   \\  \hline
\end{tabular}
\smallskip
\smallskip
\smallskip
\caption{Growth of $\gamma(n)$ is monotonic for $n\geq 25$}\label{Table2}\end{center}
\end{table}

\subsection{Congruences and common divisors}\label{sect2}
For $n\geq 0$, recall the Ramanujan congruences for the partition function \cite{Ramanujan}:
\begin{flalign}\label{Ramcong}
p(5n+4)\equiv 0\  (\operatorname{mod} 5),\  \  \  \  
p(7n+5)\equiv 0\  (\operatorname{mod} 7),\  \  \  \   
p(11n+6)\equiv 0\  (\operatorname{mod} 11).
\end{flalign}
In \cite{non-unitary}, the sixth author made a conjecture based on numerical evidence in Table \ref{Table}:  
{\it Congruences similar to \eqref{Ramcong} exist for $\nu(n)$ and $\gamma(n)$.} 
However, potential patterns suggestive of this conjecture were likely coincidences based on the small data set in Table \ref{Table}. In our computations up to $n=3000$, we did not spot Ramanujan congruences for $\nu(5n+4), \gamma(7n+5)$, etc.;\footnote{We did not check for more general types of congruences such as in \cite{AO}, for $\nu(an+b), \gamma(an+b)$.}  but we spotted other 
Ramanujan-like congruences involving $\nu(n), \gamma(n),$ that we then proved. We record these observed congruences and their proofs. 

\begin{proposition} \label{thm3}
For $n\geq 1$ we have the following:
\begin{flalign*}\label{Ramcong}
p(5n)&\equiv \nu(5n)\  (\operatorname{mod} 5),\\ \nonumber
p(7n+6)&\equiv \nu(7n+6)\  (\operatorname{mod} 7),\\ \nonumber
p(11n+7)&\equiv \nu(11n+7)\  (\operatorname{mod} 11).
\end{flalign*}
%
%
%
 \end{proposition}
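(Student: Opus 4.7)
The plan is to observe that each of these congruences reduces directly to one of Ramanujan's classical congruences \eqref{Ramcong} via the recursion \eqref{nu_recursion}. The approach requires no new machinery beyond what is already in the excerpt.

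First, I would rewrite each congruence using $\nu(n) = p(n) - p(n-1)$, which is just \eqref{nu_recursion} rearranged. For the first congruence, $p(5n) \equiv \nu(5n) = p(5n) - p(5n-1) \pmod 5$ is equivalent to $p(5n-1) \equiv 0 \pmod 5$. Writing $5n-1 = 5(n-1) + 4$, this is exactly Ramanujan's congruence $p(5m+4) \equiv 0 \pmod 5$ applied at $m = n-1$.

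Next, I would treat the other two cases in the same way. For the second, $p(7n+6) \equiv \nu(7n+6) \pmod 7$ unpacks to $p(7n+5) \equiv 0 \pmod 7$, which is Ramanujan's mod $7$ congruence. For the third, $p(11n+7) \equiv \nu(11n+7) \pmod{11}$ unpacks to $p(11n+6) \equiv 0 \pmod{11}$, which is Ramanujan's mod $11$ congruence.

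Since the argument is a one-line algebraic manipulation invoking a well-known theorem, there really is no main obstacle; the only subtlety is verifying the shifted residues line up correctly (for instance, noting that the case $n = 0$ in the first congruence reads $p(0) \equiv \nu(0) \pmod 5$, which is fine since both sides equal $1$, and the Ramanujan congruence at $m = -1$ is not invoked because the proposition starts at $n \geq 1$). The proof will therefore be quite short.
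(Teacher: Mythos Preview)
Your proof is correct and follows essentially the same approach as the paper: both use the recursion \eqref{nu_recursion} to rewrite $p(n)-\nu(n)=p(n-1)$ and then invoke the corresponding Ramanujan congruence for $p(n-1)$. Your write-up is slightly more detailed in checking the residue shifts explicitly, but the underlying argument is identical.
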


\begin{proof} Recall $a\  |\  b$ means $a\in \mathbb N$ divides $b\in \mathbb N$. The claimed relations follow immediately from the Ramanujan congruences \eqref{Ramcong} combined with \eqref{nu_recursion}, since the statement $p(m)\equiv 0 \  (\operatorname{mod} c)$ is equivalent to the statement $c\  |\  \left(p(m+1)-\nu(m+1)\right)$. 
\end{proof}

The proof above depends  on known congruences for $p(n)$; without comparable results for $\nu(n)$, 
we cannot prove analogous results involving $\gamma(n)$. 
Other congruences $p(an+b)\equiv 0\  (\text{mod}\   c)$ (see  \cite{AO}) will yield further congruences $p(an+b+1)\equiv \nu(an+b+1)\  (\text{mod}\   c)$. 

In our computer searches up to $n=3000$, we also noticed interesting $\text{gcd}$ relations; we were initially surprised to find that $p(n),\nu(n)$ have nontrivial greatest common divisor  in $94.6\%$ of cases. Moreover, in $91.8\%$ of cases, all three of $p(n), \nu(n), \gamma(n)$ have $\operatorname{gcd}>1$. 
Following up on these empirical observations, we proved by elementary means the divisibility properties of $p(n),\nu(n),\gamma(n)$ are not independent. 

\begin{proposition} \label{congruence_thm2}
For $n\geq 1$, we have the following: 
\begin{enumerate}[(i)]
\item $ \operatorname{gcd}\left(p(n), \nu(n)\right)\ =\ \operatorname{gcd}\left(\nu(n), p(n-1)\right)\ =\ \operatorname{gcd}\left(p(n), p(n-1)\right);$
\vskip.1cm
\item $\operatorname{gcd}\left(\nu(n), \gamma(n)\right)\  =\  \operatorname{gcd}\left(\gamma(n), \nu(n-1)\right)\  =\  \operatorname{gcd}\left(\nu(n), \nu(n-1)\right).$
 \end{enumerate} \end{proposition}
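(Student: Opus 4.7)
The plan is to reduce the entire proposition to the elementary number-theoretic fact that, for any integers $a,b,c$ with $a=b+c$, one has the triple equality
$$\gcd(a,b)\ =\ \gcd(b,c)\ =\ \gcd(a,c).$$
This follows in one line from the observation that any common divisor of two of the three terms must also divide the remaining one, since $a-b=c$, $a-c=b$ and $b+c=a$ are integer combinations.

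For part (i), I would apply this lemma with $a=p(n)$, $b=p(n-1)$, $c=\nu(n)$, whose validity is exactly the recursion \eqref{nu_recursion}. This simultaneously yields the two claimed equalities $\gcd(p(n),\nu(n))=\gcd(\nu(n),p(n-1))=\gcd(p(n),p(n-1))$. For part (ii), I would apply the same lemma with $a=\nu(n)$, $b=\nu(n-1)$, $c=\gamma(n)$, using the recursion \eqref{gamma_recursion}, which at once delivers $\gcd(\nu(n),\gamma(n))=\gcd(\gamma(n),\nu(n-1))=\gcd(\nu(n),\nu(n-1))$.

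The only point of minor care is handling $n=1$, where $\nu(1)=\gamma(1)=0$ (and $\nu(0)=1,\gamma(0)=0$ under the convention adopted in the paper); one uses the standard convention $\gcd(m,0)=m$, under which the identities remain valid trivially. There is no substantive obstacle: the proposition is really just the statement that the Euclidean-algorithm step applied to each of the two recursions preserves gcds among the three quantities involved.
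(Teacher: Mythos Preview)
Your proposal is correct and matches the paper's own proof essentially line for line: the paper also reduces both parts to the elementary lemma that $x=y+z$ implies $\gcd(x,y)=\gcd(y,z)=\gcd(x,z)$, and then specializes to the recursions \eqref{nu_recursion} and \eqref{gamma_recursion}. Your additional remark about the $n=1$ boundary case is a harmless refinement not present in the paper.
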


\begin{proof}
Both of the congruence relations in the theorem are instances of the following fact: if $x=y+z$ for $x,y,z\in \mathbb N$, then 
$\operatorname{gcd}\left(x, y\right)\ =\ \operatorname{gcd}\left(y, z\right)\ =\ \operatorname{gcd}\left(x, z\right)$. 
To see this, set $d:=\operatorname{gcd}\left(x,y\right)$; then $d$ is also a divisor of $z=x-y$. If $d\geq 1$ does {\it not} equal $\operatorname{gcd}\left(y,z\right)$, then there must exist $d'>d$ such that $d'$ divides both $y,z$. But then $d'$ also divides $x=y+z$, contradicting that $d=\operatorname{gcd}\left(x,y\right)$. This proves that $d=\operatorname{gcd}\left(y,z\right).$ 
By a similar argument, if there exists $d''>d$ that divides both $x,z$, then $d''$ divides $y=x-z$, again contradicting that $d=\operatorname{gcd}\left(x,y\right)$. This proves $d=\operatorname{gcd}\left(x,z\right).$ Setting  $(x,y,z)=\left(p(n), \nu(n), p(n-1)\right)$ gives (i). Setting $(x,y,z)=\left(\nu(n), \gamma(n), \nu(n-1)\right)$ gives (ii).
\end{proof}

It is interesting that congruence and divisibility properties of $p(n)$ depend to some extent on the more primitive functions $\nu(n), \gamma(n)$. In \cite{non-unitary}, it is conjectured these phenomena could be used to reverse-engineer a proof of the Ramanujan congruences \eqref{Ramcong} by induction, which would be a useful application, if it is possible. 
We note that, while the proofs of Propositions  \ref{thm3} and \ref{congruence_thm2} explain our observations, in those cases they do not indicate 
 new partition-theoretic phenomena.  On the other hand, our group found $\operatorname{gcd}$ relations we could not prove, connecting $p(n), \nu(n),$ and $\gamma(n)$, to be ubiquitous in our numerical data.
Further study of arithmetic connections between these functions seems warranted.

\section{Non-unitary partition connections in the authors' other works}\label{sect3}
 
Beyond the earlier usages \cite{N1, Guy, N2, N3} of non-unitary partitions in the literature, and the study in \cite{non-unitary}, non-unitary partitions have distinguished themselves in the present authors'  works as being of interest. In \cite{SillsChamp}, non-unitary partitions
arise in a statistical application, in the context of estimating a population's standard deviation using a linear combination of ranges of subsamples of a sample of size $n$, which are indexed by partitions of size $n$; only non-unitary partitions can be used (they are called  ``admissible'' partitions there).  
In the theory of partition zeta functions \cite{Robert_zeta}, the partition-theoretic zeta series diverge over subsets of $\mathcal P$ where 1's can appear as parts with unbounded multiplicity, and non-unitary partitions are a necessary condition for the existence of partition Euler products. In \cite[Thm. 29]{SS_norm}, the Dirichlet series generating function for the number of non-unitary partitions with {norm} equal to $n$ is evaluated.  In \cite{Robert_bracket}, only partitions with no 1's have nonzero partition phi function $\varphi_{\mathcal P}(\lambda)$. In  \cite{DawseyJustSchneider}, under the supernorm isomorphism, the set $\mathcal N$ maps bijectively to the odd natural numbers, while partitions with $1$'s map to the even numbers.  
Do non-unitary partitions distinguish themselves naturally in other contexts in the mathematical sciences?

\section*{Acknowledgments}
We are very grateful to George Andrews for pointing out simplified proofs of Theorems \ref{thm1} and \ref{thm2}, and for providing references to the classical theory of invariants; to Jeffrey Lagarias for advice about Young's lattice and injective maps on partitions; to Fabrizio Zanello for providing the Gupta reference on finite differences of $p(n)$; and 
to Sophia Ramirez for making additional computations as an  early member of our group. We are also thankful to the anonymous referee, for suggestions that improved the paper substantially. 

This work stems from the cross-institutional Computational and Experimental Number Theory group sponsored jointly by M. Hendon (University of Georgia), R. Schneider (Michigan Technological University), and A. V. Sills (Georgia Southern University), involving graduate students, undergraduates, and faculty collaborators. The group used Python, Mathematica, Magma, Google Sheets, Microsoft Excel, and Texas Instruments calculators for computations.
Partial support for the first and second authors was provided by the Research and Training Group grant DMS-1344994 funded by the National Science Foundation.
The second author was also supported in part by the National Science Foundation Graduate Research Fellowship under Grant No. 1842396. The third author was partially supported by a research assistantship through University of Georgia's Center for Undergraduate Research Opportunities.

\end{document}